\newtheorem{theorem}{Theorem}
\newtheorem{lemma}{Lemma}
\theoremstyle{remark}
\newtheorem{remark}{Remark}
\theoremstyle{definition}
\newcommand{\NN}{\mathbb{N}}
\newcommand{\FF}{\mathbb{F}}
\begin{document}

\title{Counting appearances of integers in sets of arithmetic progressions}

\author{Florian Pausinger}
\address{Department of Mathematics and CEMSUL, Faculty of Sciences, University of Lisbon, Portugal.}
\email{fpausinger at fc.ul.pt}
\date{}


\begin{abstract}
The sequence $A067549$ of The On-Line Encyclopedia of Integer Sequences \cite{oeis}, is defined as $(a_k)_{k \geq  1}$ with $a_k$ being the determinant of the $k \times k$ matrix whose diagonal contains the first $k$ prime numbers and all other elements are ones.
We relate this sequence to a concrete counting problem.
Choose an arbitrary residue class $r_i$ for each prime $p_i$ with $1 \leq i \leq k$ and set $P_k = \prod_{i=1}^k p_i$. We show that $a_k$ is the number of integers in $[1, P_k]$ that are contained in \emph{at most} one of the $k$ chosen residue classes.
Interestingly, we show that this sequence is closely related to the better known sequence $A005867$ for which we derive a novel characterisation in terms of determinants and which gives the number of integers in $[1, P_k]$ that are not contained in any of the $k$ residue classes.

Our proof is purely structural and, therefore, it can be generalised to counting appearances of integers in residue classes of arbitrary arithmetic progressions generated by $k$ different primes using the determinant of a matrix of ones having those $k$ primes on its diagonal.
The revealed structure also offers a fast way of calculating such determinants.
\\[5pt]
{\bf MSC2020: } 11A07, 11B25. \\
{\bf Keywords: } counting; sets of arithmetic progressions; A067549; A005867;
\end{abstract}
\maketitle

\section{Introduction}
The sequence $A067549$ of The On-Line Encyclopedia of Integer Sequences \cite{oeis}, is defined as $(a_k)_{k \geq  1}$ with $a_k$ being the determinant of the $k \times k$ matrix whose diagonal contains the first $k$ prime numbers and all other elements are ones, i.e., 
$$ a_k = \det
 \begin{bmatrix}
2 & 1 & \ldots  & 1 \\
1 & 3 & \ldots & 1 \\
\vdots &  \vdots & \ddots & \vdots \\
1 & 1 & \ldots & p_k 
\end{bmatrix}.  $$
The first five elements, for $k=1, \ldots, 5$, of this sequence are
$$2, 5, 22, 140, 1448, \ldots,$$
see \cite{oeis} for a table of the first 300 elements of this sequence.
We relate this sequence to a concrete counting problem.
Let $p_1, \ldots, p_k$ be the first $k$ prime numbers and let $P_k:=\prod_{i=1}^k p_i$ be their product.
We choose an arbitrary residue class $r_i$ for each prime $p_i$ and consider $k$ segments of arithmetic progressions restricted to the interval $[1,P_k]$, i.e., for $p_i$ and $r_i$ we consider
$$s_i = \{n : n= h\cdot p_i + r_i, h \in \NN_0 \} \cap [1,P_k]$$

For a given choice $r_1, \ldots, r_k$ of residue classes, let
$$\gamma(n)=\# \{p_i: n\equiv r_i \pmod{p_i} \}$$
for $1 \leq n \leq P_k$ be the number of residue classes in which $n$ is contained.
We introduce some notation:
\begin{itemize}
\item An integer $n$ is \emph{covered by a prime $p_i$} if it is contained in the residue class $r_i$.
\item An integer $n$ is \emph{free} if it is not contained in any residue class, i.e., $\gamma(n)=0$.
\item An integer $n$ is \emph{available} if it is contained in at most one residue class, i.e., $\gamma(n)\leq 1$.
\item An integer $n$ is \emph{occupied} if it is contained in at least two residue class, i.e., $\gamma(n)\geq 2$.
\end{itemize}
By definition, a free integer is also available. 
Moreover, we define a second sequence $(f_k)_{k\geq 1}$ as follows:
$$ f_k = (-1)^{k} \det
 \begin{bmatrix}
 1 & 1 & \ldots & 1 & 1\\
2 & 1 & \ldots  & 1& 1 \\
1 & 3 & \ldots & 1 & 1\\
\vdots &  \vdots & \ddots & \vdots  & \vdots\\
1 & 1 & \ldots & p_{k} & 1 
\end{bmatrix}.  $$
The first five elements of this sequence for $k=1, \ldots, 5$ are
$$1, 2, 8, 48, 480 \ldots,$$
We have the following main result:

\begin{theorem} \label{thm:main}
Let $p_1, \ldots, p_k$ be the first $k$ prime numbers and let $P_k:=\prod_{i=1}^k p_i$ be their product. 
For a given choice of residue classes, i.e., one residue class $r_i$ for each prime $p_i$, the number of available elements in $[1,P_k]$ is $a_k$ and the number of free elements is $f_k$.
In other words, for every choice of residue classes, there are $a_k$ integers $n$ with $1\leq n \leq P_k$ such that $\gamma(n)\leq 1$ and $f_k$ integers with $\gamma(n)=0$.
\end{theorem}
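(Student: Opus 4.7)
The plan is to establish both counts directly via the Chinese Remainder Theorem and then to identify each determinant with the resulting closed form. First I would invoke CRT: for the first $k$ primes, the reduction map $n \mapsto (n \bmod p_1, \ldots, n \bmod p_k)$ is a bijection between $[1,P_k]$ and the product $\prod_{i=1}^{k}\ZZ/p_i\ZZ$. Under this bijection, the event ``$n$ is covered by $p_i$'' is exactly ``the $i$-th coordinate equals $r_i$'', and the $k$ coordinate-events are combinatorially independent. This reduces the problem to counting tuples of residues.

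The counting step is then immediate. A free integer corresponds to a tuple avoiding the designated residue in every coordinate, giving $F := \prod_{i=1}^{k}(p_i-1)$ possibilities. An integer covered by exactly one prime corresponds to a tuple hitting $r_i$ in precisely one coordinate, giving $\sum_{i=1}^{k}\prod_{j\neq i}(p_j-1)$ possibilities. Hence the number of available integers equals
\[
A \;:=\; \prod_{i=1}^{k}(p_i-1)\,+\,\sum_{i=1}^{k}\prod_{j\neq i}(p_j-1) \;=\; \prod_{i=1}^{k}(p_i-1)\cdot\Bigl(1+\sum_{i=1}^{k}\tfrac{1}{p_i-1}\Bigr).
\]

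The remaining task is to match these quantities to the two determinants. For $a_k$, I would rewrite the defining matrix as $D+\mathbf{1}\mathbf{1}^{\top}$ with $D=\mathrm{diag}(p_1-1,\ldots,p_k-1)$, which is invertible since every $p_i\geq 2$. The matrix determinant lemma then gives
\[
\det(D+\mathbf{1}\mathbf{1}^{\top}) \;=\; \det(D)\bigl(1+\mathbf{1}^{\top}D^{-1}\mathbf{1}\bigr) \;=\; A,
\]
which is the required identity $a_k = A$. For $f_k$, I would subtract the first row of the $(k+1)\times(k+1)$ matrix from each of the other rows: the last column becomes $(1,0,\ldots,0)^{\top}$, while the lower-left $k\times k$ block collapses to $\mathrm{diag}(p_1-1,\ldots,p_k-1)$. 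Cofactor expansion along the last column produces a sign $(-1)^{k+2}=(-1)^{k}$, which cancels the prefactor in the definition of $f_k$ and leaves $f_k=F$.

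The only real delicacy is recognising the $a_k$-matrix as a rank-one update of a diagonal matrix so that the matrix determinant lemma applies; once that is done, both equalities are a matter of bookkeeping. The argument is purely structural and transparently generalises to any $k$ distinct primes, consistent with the remark preceding the theorem.
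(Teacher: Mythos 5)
Your proposal is correct, but it takes a genuinely different route from the paper. The paper never evaluates either determinant in closed form: it proves the recursions $f_k=(p_k-1)f_{k-1}$ and $a_k=f_{k-1}+(p_k-1)a_{k-1}$ by Laplace expansion and row decomposition (Lemmas \ref{lem1}, \ref{lem2}), proves the matching recursions for $free(k)$ and $av(k)$ by a structural argument --- extending the pattern on $[1,P_{k-1}]$ to $p_k$ translated copies in $[1,P_k]$ and noting that each integer's translates hit the new progression exactly once (Lemma \ref{lem3}) --- and then concludes by induction from the small base cases. You instead count directly via the CRT bijection $n\mapsto(n\bmod p_1,\ldots,n\bmod p_k)$, obtaining the closed forms $free(k)=\prod_{i}(p_i-1)$ and $av(k)=\prod_i(p_i-1)\bigl(1+\sum_i\tfrac{1}{p_i-1}\bigr)$, and then identify these with the determinants: $a_k$ via the rank-one-update (matrix determinant lemma) identity for $D+\mathbf{1}\mathbf{1}^{\top}$ with $D=\mathrm{diag}(p_1-1,\ldots,p_k-1)$, and $f_k$ via row reduction and cofactor expansion, where the sign $(-1)^{k+2}$ indeed cancels the prefactor. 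Both halves check out numerically ($a_2=5$, $a_3=22$, $f_3=8$, etc.), and your argument has the advantage of producing explicit formulas (in particular recovering the known product formula behind $A005867$) and of applying verbatim to arbitrary distinct primes, which is the content of Theorem \ref{thm2}; what it does not exhibit is the recursive structure that the paper highlights as a by-product, namely the fast recursive computation of such determinants and the clean coupling $a_{k+1}=f_k+(p_{k+1}-1)a_k$ between the two sequences.
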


Moreover, our method can be generalised to counting free and available integers in arbitrary sets of $k$ arithmetic progressions generated by $k$ different primes as shown in Section \ref{sec5}.

\begin{remark}
The problem of determining the number of free integers in the interval $[1,P_k]$ with residue classes $r_i=0$ for all $i$ is already mentioned by Dickson \cite[p 439]{dickson} in the first volume of his History of the Theory of Numbers. He refers to results by A. de Polignac, J. Deschamps and H.J.S Smith for which Martin \cite{martin} provides proofs in his manuscript. 
Determining the number of available integers and the connection of the two counting problems to the determinants $a_k$ and $f_k$ are new to the best of our knowledge.
\end{remark}

\begin{remark}
Lemma \ref{lem1} below gives a new characterisation of sequence $A005867$ from the The On-Line Encyclopedia of Integer Sequences \cite{oeis2} in terms of determinants.
\end{remark}

\begin{remark}
As a side product, the proof of Theorem \ref{thm:main} provides a quick way of calculating determinants of the form $a_k$ and $f_k$.
\end{remark}

The paper is organized as follows. In Section \ref{sec2} we derive structural results about $a_k$ and $f_k$. In Section \ref{sec3} we analyse the counting problem and obtain an equation that relates the available numbers after considering $k$ progressions to the available numbers after considering $k+1$ progressions. Theorem \ref{thm:main} is proven in Section \ref{sec4} and generalised in Section \ref{sec5}.

\section{Preliminaries}
\label{sec2}

We start with a structural lemma for $f_k$.

\begin{lemma} \label{lem1}
For $k\geq 2$ we have that 
$$f_k = (p_{k}-1) f_{k-1}.$$
\end{lemma}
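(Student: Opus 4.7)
The plan is to evaluate the $(k+1) \times (k+1)$ determinant in the definition of $f_k$ explicitly by a short sequence of elementary row operations; the recurrence will then drop out immediately. I will not need induction.

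First I would subtract the first row (which is $(1,1,\ldots,1)$) from each of the $k$ rows below it. Row $i+1$ of the original matrix is $(1,\ldots,1,p_i,1,\ldots,1)$ with $p_i$ in column $i$ and ones elsewhere, so after the subtraction it becomes a vector whose only nonzero entry is $p_i - 1$ in column $i$. The first row is unchanged, and in particular the last column of every row below the first becomes $0$.

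Next I would expand the resulting determinant along the last column. In that column the only nonzero entry is the $1$ sitting in position $(1, k+1)$, so the expansion reduces the computation to $(-1)^{1+(k+1)} = (-1)^k$ times the determinant of the $k \times k$ submatrix obtained by deleting row $1$ and column $k+1$. That submatrix is diagonal with diagonal entries $p_1 - 1, p_2 - 1, \ldots, p_k - 1$, so its determinant is $\prod_{i=1}^{k}(p_i - 1)$.

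Putting the two steps together, the determinant in the definition of $f_k$ equals $(-1)^k \prod_{i=1}^{k}(p_i - 1)$, and the prefactor $(-1)^k$ in the definition cancels the sign. Hence
\[
f_k \;=\; \prod_{i=1}^{k}(p_i - 1),
\]
from which the stated recurrence $f_k = (p_k - 1)\, f_{k-1}$ is immediate (and, as noted in the remark following the lemma, this also gives the new determinantal characterisation of sequence $A005867$). I do not anticipate a real obstacle; the only thing to handle carefully is the sign in the cofactor expansion, and a sanity check against the listed values $f_1,\ldots,f_5 = 1, 2, 8, 48, 480$ matches $\prod (p_i - 1) = 1, 1\cdot 2, 1\cdot 2\cdot 4, 1\cdot 2\cdot 4\cdot 6, 1\cdot 2\cdot 4\cdot 6\cdot 10$.
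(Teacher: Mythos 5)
Your argument is correct, and it takes a genuinely different route from the paper. The paper proves the recurrence directly by a Laplace expansion of the $(k+1)\times(k+1)$ determinant along its last row, observing that the first $k-1$ cofactors vanish (deleting one of those columns makes a row coincide with the all-ones first row), so only the last two terms survive and combine to $(p_k-1)f_{k-1}$. You instead row-reduce (subtracting the all-ones first row from every other row), expand along the last column, and obtain the closed form $f_k=\prod_{i=1}^k(p_i-1)$, from which the recurrence is immediate. Your sign bookkeeping is right: the lone nonzero entry of the last column sits in position $(1,k+1)$, contributing $(-1)^{1+(k+1)}=(-1)^k$, which cancels the $(-1)^k$ in the definition of $f_k$. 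What your approach buys is the explicit product formula, i.e.\ the Euler-totient value $\varphi(P_k)=\prod_{i=1}^k(p_i-1)$, which makes the identification with sequence $A005867$ (mentioned in Remark 2) completely transparent rather than leaving it as a consequence of the recurrence plus the base case; what the paper's expansion buys is a template that is reused almost verbatim in Lemma \ref{lem2}, where no such clean closed form is available for $a_k$. Either proof suffices for the role Lemma \ref{lem1} plays in Theorem \ref{thm:main}.
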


\begin{proof}
First assume that $k$ is even. In the following we use the Laplace expansion of the initial matrix along its last row; see e.g. \cite[Chapter 1]{shilov}. Note that the first $k-1$ terms of this expansion all have determinant 0. Since $k$ is even, the sign of the element in row $k+1$ and column $k$ in the expansion is $(-1)^{k} (-1)^{k-1} = -1$ and the sign of the element in row $k+1$ and column $k+1$ is $(-1)^{k} (-1)^{k} =1$.
Hence, we have that
\begin{align*}
f_k &= (-1)^{k}\det
 \begin{bmatrix}
 1 & 1 & \ldots & 1 & 1\\
2 & 1 & \ldots  & 1& 1 \\
1 & 3 & \ldots & 1 & 1\\
\vdots &  \vdots & \ddots & \vdots  & \vdots\\
1 & 1 & \ldots & p_{k} & 1 
\end{bmatrix} \\
&=  (-1)^{k} \left( - p_{k} \det
 \begin{bmatrix}
 1 & 1 & \ldots & 1 & 1\\
2 & 1 & \ldots  & 1& 1 \\
1 & 3 & \ldots & 1 & 1\\
\vdots &  \vdots & \ddots & \vdots  & 1\\
1 & 1 & \ldots & p_{k-1} & 1 
\end{bmatrix} + \det
\begin{bmatrix}
 1 & 1 & \ldots & 1 & 1\\
2 & 1 & \ldots  & 1& 1 \\
1 & 3 & \ldots & 1 & 1\\
\vdots &  \vdots & \ddots & \vdots  & \vdots\\
1 & 1 & \ldots & p_{k-1} & 1 
\end{bmatrix}
\right )
 \\
& = (p_{k} - 1) f_{k-1}
\end{align*}
The same argument works for odd $k$.
\end{proof}

We have a second structural lemma for $a_k$.

\begin{lemma} \label{lem2}
For $k\geq 2$ we have that 
$$a_k = f_{k-1} + (p_k-1) a_{k-1}.$$
\end{lemma}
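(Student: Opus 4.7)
The plan is to apply multilinearity of the determinant on the last row $(1,1,\ldots,1,p_k)$ of the matrix defining $a_k$, writing it as the sum $(1,1,\ldots,1,1) + (0,0,\ldots,0,p_k-1)$. This splits $a_k = D_1 + D_2$, where $D_1$ is the determinant of the $k \times k$ matrix obtained from the $a_k$-matrix by replacing the $(k,k)$-entry $p_k$ with $1$ (so its diagonal reads $p_1,p_2,\ldots,p_{k-1},1$ and its off-diagonal entries are all $1$), and $D_2$ is the determinant of the same matrix but with its last row set to $(0,0,\ldots,0,p_k-1)$.

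The term $D_2$ is immediate: a Laplace expansion along the zero-padded last row gives $D_2 = (p_k-1)\,a_{k-1}$, because the remaining minor is precisely the matrix defining $a_{k-1}$.

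For $D_1$ I would cyclically move its last row (which is $(1,1,\ldots,1)$ after the replacement) up to the first position. Doing so requires $k-1$ adjacent row transpositions and therefore multiplies the determinant by $(-1)^{k-1}$. The matrix that results has a top row of $1$s, a last column of $1$s, and carries $p_1,p_2,\ldots,p_{k-1}$ on the shifted diagonal starting at position $(2,1)$; that is exactly the matrix appearing inside the definition $f_{k-1}=(-1)^{k-1}\det(\,\cdot\,)$. The two factors of $(-1)^{k-1}$ cancel and yield $D_1 = f_{k-1}$.

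Assembling the two pieces gives $a_k = f_{k-1} + (p_k-1)\,a_{k-1}$. Nothing here is truly delicate; the only place that requires care is the sign accounting in the row-swap step, which is a routine but easy-to-mis-count piece of bookkeeping.
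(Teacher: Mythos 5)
Your proposal is correct and follows essentially the same route as the paper: split the last row $(1,\ldots,1,p_k)$ by multilinearity, identify the zero-padded piece as $(p_k-1)a_{k-1}$ by Laplace expansion, and recognize the all-ones-row piece as $f_{k-1}$ after a cyclic row shift whose sign $(-1)^{k-1}$ cancels the sign in the definition of $f_{k-1}$. The sign bookkeeping in your last step is exactly right, so nothing needs to be added.
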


\begin{proof}
We use the following rule for determinants:
$$\det (v_1, v_2, \ldots, v_n + w) = \det (v_1, v_2, \ldots, v_n) + \det (v_1, v_2, \ldots, w)$$
in which $w, v_1, \ldots, v_n$ are row (or column vectors). We have that
\begin{align*}
a_k &= \det
 \begin{bmatrix}
2 & 1 & \ldots  & 1 \\
1 & 3 & \ldots & 1 \\
\vdots &  \vdots & \ddots & \vdots \\
1 & 1 & \ldots & p_k 
\end{bmatrix} 
= \det 
\begin{bmatrix}
2 & 1 & \ldots  & 1 & 1\\
1 & 3 & \ldots & 1 & 1\\
\vdots &  \vdots & \ddots & \vdots & \vdots \\
1 & 1 & \ldots & p_{k-1} & 1 \\
1 & 1 & \ldots & 1 & 1
\end{bmatrix} +
\det 
\begin{bmatrix}
2 & 1 & \ldots  & 1 & 1\\
1 & 3 & \ldots & 1 & 1\\
\vdots & \vdots & \ddots & \vdots & \vdots\\
1 & 1 & \ldots & p_{k-1} & 1 \\
0 & 0 & \ldots & 0& (p_k-1) 
\end{bmatrix}
\end{align*}
Now observe that
$$
\det 
\begin{bmatrix}
2 & 1 & \ldots  & 1 & 1\\
1 & 3 & \ldots & 1 & 1\\
\vdots &  \vdots & \ddots & \vdots & \vdots \\
1 & 1 & \ldots & p_{k-1} & 1 \\
1 & 1 & \ldots & 1 & 1
\end{bmatrix} = (-1)^{k-1} \det
\begin{bmatrix}
1 & 1 & \ldots & 1 &1\\
2 & 1 & \ldots  & 1 &1\\
1 & 3 & \ldots & 1 &1\\
\vdots &  \vdots & \ddots & \vdots & \vdots\\
1 & 1 & \ldots & p_{k-1} &1
\end{bmatrix} = f_{k-1}
$$
by the rules for determinants and 
$$
\det 
\begin{bmatrix}
2 & 1 & \ldots  & 1 & 1\\
1 & 3 & \ldots & 1 & 1\\
\vdots & \vdots & \ddots & \vdots & \vdots\\
1 & 1 & \ldots & p_{k-1} & 1 \\
0 & 0 & \ldots & 0& (p_k-1) 
\end{bmatrix} =  (p_k-1) a_{k-1}
$$
by the Laplace expansion along the last row. Note that the sign of diagonal elements is always positive in the Laplace expansion.
Consequently, we have that
$$a_k = f_{k-1} + (p_k-1) a_{k-1} $$
\end{proof}

\section{The counting problem}
\label{sec3}

We aim to understand the systematic intersections in the interval $[1,P_k]$ of $k$ arithmetic progressions generated by the first $k$ prime numbers $p_1, \ldots, p_k$ and arbitrary choice of residue classes $r_1, \ldots, r_k$ with $P_k = \prod_{i=1}^k p_i$.
In particular, we want to understand whether an integer $1\leq n \leq P_k$ is not covered by any residue class, is contained in at most one or in at least two residue classes.

Assume we have generated all $k$ arithmetic progression and intersected them with the interval $[1,P_k]$ obtaining the sets
$s_i$, $1\leq i \leq k$.
For each integer $1\leq n \leq P_k$ we can now ask in how many sets $s_i$ it is contained, i.e., compute $\gamma(n)$. 
We denote the number of all available integers with $av(k)$, the number of free integers with $free(k)$ and the number of integers that are contained in two or more sets with $occ(k)$, i.e.
\begin{align*}
av(k) &:= \# \{n : \gamma(n) \leq 1 \} \\
free(k) &:=\# \{n : \gamma(n) = 0 \}  \\
occ(k) &:=\# \{n : \gamma(n) > 1 \} 
\end{align*}
To illustrate this, we consider the cases $k=1$ and $k=2$ with $p_1=2$, $p_2=3$ as well as $P_1=2$ $P_2=6$.
It is easy to see that $occ(1)=0$, $free(1)=1$ and $av(1)=2$, because the arithmetic progression for $p_1=2$ is the first progression we consider and it contains one element in $[1,2]$.

Independent of the choice of residue classes, the arithmetic progression for $3$ will contain $2$ elements in $[1,6]$ and the progression for $2$ will contain $3$ elements.
By the Chinese Remainder Theorem (CRT) the two arithmetic progressions intersect in exactly one point in the interval $[1,6]$, again independent of the choice of $r_1$ and $r_2$.
Hence, three of the six integers in $[1,6]$ are covered by 2 and two integers are covered by 3. Since there is exactly one intersection, five of the 6 integers are available, i.e., $occ(2)=1$, $av(2)=5$.
Furthermore, two of the six elements are not covered independent of the choice of residue classes, i.e., $free(2)=2$ and it holds that
\begin{align*}
free(2) &= 2 = (3-1) \cdot 1 = (p_2 - 1) \cdot free(1) \\
occ(2) & = 1 = 2 + (3-1) \cdot 0 - 1 = P_{1} + (p_2-1) \cdot occ(1) - free(1)
\end{align*}
We have the following general equations. 

\begin{lemma} \label{lem3}
For $k \geq 2$ we have that
$$occ(k) =P_{k-1} + (p_k-1) \cdot occ(k-1) - free(k-1)$$
and 
$$free(k) = (p_{k}-1) \cdot free(k-1)$$
\end{lemma}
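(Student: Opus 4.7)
The plan is to prove both recurrences simultaneously by the Chinese Remainder Theorem, lifting the coverage structure from $[1,P_{k-1}]$ to $[1,P_k]$ and then accounting for the effect of adding the new residue class $r_k$ modulo $p_k$.

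The first step is to observe that the status ``free'', ``covered by exactly one of $p_1,\ldots,p_{k-1}$'', or ``occupied'' depends only on the residue of an integer modulo $P_{k-1}$. Hence, ignoring $p_k$, the coverage pattern in $[1,P_k]$ is just the pattern in $[1,P_{k-1}]$ repeated $p_k$ times, yielding $p_k \cdot free(k-1)$ free, $p_k \cdot occ(k-1)$ occupied, and $p_k \cdot \bigl(P_{k-1} - free(k-1) - occ(k-1)\bigr)$ integers covered by exactly one prime.

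Next I introduce the new progression. The residue class $r_k \pmod{p_k}$ contains exactly $P_{k-1}$ integers in $[1,P_k]$, and by CRT these form a complete residue system modulo $P_{k-1}$. So among these $P_{k-1}$ newly covered integers, exactly $free(k-1)$ were free, $occ(k-1)$ were occupied, and $P_{k-1} - free(k-1) - occ(k-1)$ were covered by exactly one of $p_1,\ldots,p_{k-1}$.

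From this partition the recurrences follow by bookkeeping. For the free count, the only change is that $free(k-1)$ previously free integers get covered by $p_k$, so
\[
free(k) = p_k \cdot free(k-1) - free(k-1) = (p_k-1)\cdot free(k-1).
\]
For the occupied count, the integers that become newly occupied are precisely those covered by exactly one earlier prime and lying in the new class, so
\[
occ(k) = p_k \cdot occ(k-1) + \bigl(P_{k-1} - free(k-1) - occ(k-1)\bigr) = P_{k-1} + (p_k-1)\cdot occ(k-1) - free(k-1).
\]
The main obstacle I expect is simply stating the CRT application cleanly; once the complete residue system structure of one class modulo $p_k$ is recognised, the rest is a direct count.
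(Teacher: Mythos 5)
Your proof is correct and follows essentially the same route as the paper: replicate the coverage pattern of $[1,P_{k-1}]$ periodically across $[1,P_k]$ and use coprimality of $p_k$ with $P_{k-1}$ to see that the new progression picks out exactly one integer from each residue class modulo $P_{k-1}$. The only (harmless) differences are that you view this CRT fact from the side of the new class being a complete residue system modulo $P_{k-1}$, whereas the paper views the $p_k$ translates of each $x \in [1,P_{k-1}]$ as hitting every class modulo $p_k$ once, and that you dispense with the paper's induction framing, which is not really needed.
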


\begin{proof}
We prove the assertion by induction on $k$. It is true for $k=2$ by the above example.
Now assume it is true for $k-1$. This means after considering the first $k-1$ arithmetic progressions restricted to the interval $[1,P_{k-1}]$, we have $free(k-1)$ free integers and $occ(k-1)$ integers contained in two or more sets. Then, there are $av(k-1)=P_{k-1} - occ(k-1)$ many available integers.

Extending the $k-1$ progressions from the interval $[1,P_{k-1}]$ to $[1,P_k]$ basically generates $p_k$ copies of the pattern observed in $[1,P_{k-1}]$. Hence, we find $p_k \cdot free(k-1)$ free integers, $p_k \cdot av(k-1)$ available integers and $p_k \cdot occ(k-1)$ obstructed integers in $[1,P_k]$ before considering the $k$-th progression.

Now take an arbitrary number $x \in [1,P_{k-1}]$ and consider its $p_k$ translates
$$x,\ x+ P_{k-1}, \ldots, \ x+ (p_k-1) P_{k-1}$$
in $[1,P_k]$. The set of translates forms a permutation in $\FF_{p_k}$. In other words, independent of the residue class we choose for $p_k$, exactly one of the $p_k$ elements generated by $x$ will be contained in the $k$-th progression.

The number $x$ can be either free, available or occupied and, hence, for every free, available or occupied number from $[1,P_{k-1}]$ we get exactly one free, available or occupied number in the $k$-th progression.
In particular, this means that after adding the $k$-th progression, we find the already existing $p_k \cdot occ(k-1)$ occupied integers in $[1,P_k]$ plus the newly occupied integers, generated by those integers that were available but not free in $[1,P_{k-1}]$. Therefore, we have that
$$occ(k) = p_k \cdot occ(k-1) + av(k-1) - free(k-1).$$
In addition, before adding the $k$-th progression, we had $p_k \cdot free(k-1)$ free integers in $[1,P_k]$. However, every free integer in $[1,P_{k-1}]$ generates one integer covered by the $k$-th progression. Hence, after adding the $k$-th progression we have that
$$free(k) = (p_k-1) \cdot free(k-1).$$
\end{proof}

Using the relation $av(k)=P_{k} - occ(k)$ we can rewrite the equation as follows:
\begin{align*}
av(k) &= P_{k} - occ(k) \\
&= P_{k} - (P_{k-1} + (p_k-1) \cdot occ(k-1) - free(k-1)) \\
& = P_{k} - (P_{k-1} + (p_k-1) \cdot (P_{k-1} - av(k-1))  - free(k-1))\\
& = (p_k-1) \cdot av(k-1) + free(k-1)
\end{align*}

\section{Proof of Theorem \ref{thm:main}}
\label{sec4}

We have already seen that $a_1=2 = av(1)$ and $f_1 = 1 = free(1)$. 
Furthermore, for $k=2$ we have that
$$
a_2 = \det
 \begin{bmatrix}
2 & 1  \\
1 & 3  
\end{bmatrix} = 2 \cdot 3 - 1 = 5
$$
is the number of available integers, which we have determined in the example in Section \ref{sec3}
and
$$
f_2 = (-1)^{2} \det
 \begin{bmatrix}
1 & 1 & 1 \\
2 & 1  & 1\\
1 & 3  & 1
\end{bmatrix} =1 + 1 + 2 \cdot 3 - 2 - 3 -1 = 2
$$
gives exactly the number of free elements, i.e., those integers in $[1,6]$ that are not covered by any of the two primes.
To make things more interesting we consider one more example. We have that
$$
a_3 = \det
 \begin{bmatrix}
2 & 1  & 1\\
1 & 3  & 1 \\
1 & 1 & 5
\end{bmatrix} = 30 + 1 + 1 - 2 - 3 - 5 = 22
$$
and we see that
$$ a_3 = 22 = 2 + (5-1) \cdot 5 = f_2 + (p_3 - 1) \cdot a_2 $$

To turn to the general step, assume that $a_k$ is the number of available elements in $[1,P_k]$ and $f_k$ is the number of free elements. By Lemma \ref{lem3} we know that 
$$av(k+1) = (p_k-1) \cdot av(k) + free(k).$$
Furthermore, by Lemma \ref{lem2} we know that 
$$a_{k+1} = (p_k-1) a_{k} + f_{k} $$
and, therefore, we conclude that $av(k+1) = a_{k+1}$.

Similarly, it was shown in Lemma \ref{lem3} that
$$free(k+1) = (p_k-1) \cdot free(k)$$
and by Lemma \ref{lem1} we have
$$f_{k+1} = (p_{k}-1) f_{k}.$$
Hence, we conclude that $free(k+1)=f_{k+1}$ and the theorem is proven.

\section{Generalisation}
\label{sec5}

Note that our argument is purely structural and we did not use the explicit values of the first $k$ primes. We only used explicit values for the base case of the theorem. However, this base case can easily be generalised as the next lemmas shows. Moreover, we used the prime property when we considered permutations of $\FF_{p_k}$. But again, no explicit value of $p_k$ was used. Hence, it is possible to extend our method to count free and available integers in any set of $k$ arithmetic progressions generated by $k$ different primes and arbitrary residue classes.

We define
$$ A_k(p_{j_1}, \ldots, p_{j_k}) = \det
 \begin{bmatrix}
p_{j_1} & 1 & \ldots  & 1 \\
1 & p_{j_2} & \ldots & 1 \\
\vdots &  \vdots & \ddots & \vdots \\
1 & 1 & \ldots & p_{j_k}
\end{bmatrix}
$$
and 
$$  
F_k(p_{j_1}, \ldots, p_{j_k}) =(-1)^k \det
 \begin{bmatrix}
 1 & 1 & \ldots & 1 & 1\\
p_{j_1} & 1 & \ldots  & 1& 1 \\
1 & p_{j_2} & \ldots & 1 & 1\\
\vdots &  \vdots & \ddots & \vdots  & \vdots\\
1 & 1 & \ldots & p_{j_k} & 1 
\end{bmatrix}. 
$$

Then we have
\begin{lemma}\label{lem5}
Let $p_i, p_j$ be two primes and $r_i, r_j$ an arbitrary choice of residue classes for these primes. The number of available integers in $[1,p_i p_j]$ after the two progressions have been restricted to the interval is $A_2(p_i, p_j)$ and the number of free integers is $F_2(p_i, p_j)$.
\end{lemma}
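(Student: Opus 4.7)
The plan is to treat Lemma \ref{lem5} as the two-prime base case of the generalised theorem, exactly parallel to the verification already performed in Sections \ref{sec3}--\ref{sec4} for $p_1=2, p_2=3$. It therefore suffices to compute both sides of each claimed equality directly and compare.

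First I would evaluate the two determinants. Since the matrix defining $A_2(p_i,p_j)$ is $2\times 2$, one reads off $A_2(p_i,p_j)=p_ip_j-1$ immediately. For $F_2(p_i,p_j)$, I would expand the $3\times 3$ determinant along the first row and simplify; after cancellations this gives
\[
F_2(p_i,p_j) = p_ip_j - p_i - p_j + 1 = (p_i-1)(p_j-1).
\]

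Next I would count free and available integers in $[1,p_ip_j]$ directly. The progression associated to $p_i$ restricted to $[1,p_ip_j]$ contains exactly $p_j$ integers, and the progression associated to $p_j$ contains exactly $p_i$ integers. Because $\gcd(p_i,p_j)=1$, the Chinese Remainder Theorem guarantees a unique $n\in[1,p_ip_j]$ with $n\equiv r_i\pmod{p_i}$ and $n\equiv r_j\pmod{p_j}$. Hence exactly one integer is occupied, so the number of available integers is $p_ip_j-1=A_2(p_i,p_j)$, and by inclusion--exclusion the number of free integers is $p_ip_j - p_i - p_j + 1 = (p_i-1)(p_j-1) = F_2(p_i,p_j)$.

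There is no real obstacle here: the only symbolic step is the routine $3\times 3$ determinant expansion, and the counting argument is the same one used in Section \ref{sec3} for $p_1=2,p_2=3$, with the Chinese Remainder Theorem playing the role of coprimality. This base case is precisely what is required so that the inductive machinery of Lemmas \ref{lem1}, \ref{lem2} and \ref{lem3} extends verbatim, since those proofs never depended on the specific values of the primes $p_i$, only on the fact that they are pairwise distinct.
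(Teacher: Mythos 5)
Your proof is correct and follows essentially the same route as the paper: evaluate $A_2(p_i,p_j)=p_ip_j-1$ and $F_2(p_i,p_j)=p_ip_j-p_i-p_j+1$, then count available integers via the unique CRT intersection and free integers via inclusion--exclusion. You merely spell out the counting step that the paper delegates to its $p_1=2$, $p_2=3$ example, so no further changes are needed.
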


\begin{proof}
We have that $A_2(p_i, p_j) = p_i p_j - 1$ and the assertion follows by the same argument as in the example for $p_i=2$ and $p_j=3$. Similarly, $F_2(p_i, p_j) = p_i \cdot p_j - p_i - p_j +1 $ from which the assertion follows by the principle of inclusion-exclusion.
\end{proof}

\begin{lemma}\label{lem5}
Let $p_i, p_j, p_k$ be three primes and $r_i, r_j, r_k$ an arbitrary choice of residue classes for these primes. The number of available integers in $[1,p_i p_j p_k]$ after the $3$ progressions have been restricted to the interval is $A_3(p_i, p_j, p_k)$.
\end{lemma}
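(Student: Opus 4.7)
The plan is to mimic the proof of Theorem \ref{thm:main} in the generalised setting with three arbitrary primes. Two ingredients are needed: a determinant recurrence (the analogue of Lemmas \ref{lem1} and \ref{lem2}) and a counting recurrence (the analogue of Lemma \ref{lem3}). The base case is already provided by the preceding lemma.

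First I would revisit the determinant manipulations in the proofs of Lemmas \ref{lem1} and \ref{lem2}. Those arguments proceed purely by Laplace expansion and by the additivity of the determinant in a single row, and nowhere use the specific values of the diagonal entries beyond naming them $p_k$. Hence, reading the same calculations with $p_{j_1}, p_{j_2}, p_{j_3}$ in place of $p_1, p_2, p_3$ yields
\[
F_3(p_{j_1},p_{j_2},p_{j_3}) = (p_{j_3}-1)\,F_2(p_{j_1},p_{j_2})
\]
and
\[
A_3(p_{j_1},p_{j_2},p_{j_3}) = F_2(p_{j_1},p_{j_2}) + (p_{j_3}-1)\,A_2(p_{j_1},p_{j_2}).
\]

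Next I would re-examine Lemma \ref{lem3}. Its inductive step relied on two facts: that the first $k-1$ progressions, once extended from $[1,P_{k-1}]$ to $[1,p_k P_{k-1}]$, produce $p_k$ identical copies of the pattern on $[1,P_{k-1}]$; and that for any $x\in[1,P_{k-1}]$ the translates $x, x+P_{k-1}, \ldots, x+(p_k-1)P_{k-1}$ run through all residues modulo $p_k$. The second fact uses only $\gcd(P_{k-1},p_k)=1$, which holds because the primes involved are pairwise distinct, not because they are the first few primes. So, writing $Q = p_{j_1}p_{j_2}$ and arguing as before, I obtain
\[
av(3) = (p_{j_3}-1)\,av(2) + free(2), \qquad free(3) = (p_{j_3}-1)\,free(2),
\]
where $av(\cdot)$ and $free(\cdot)$ now refer to the counting problem for the progressions generated by $p_{j_1},p_{j_2},p_{j_3}$ with any chosen residue classes.

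Finally I combine these with the base case. By the preceding Lemma \ref{lem5}, $av(2)=A_2(p_{j_1},p_{j_2})$ and $free(2)=F_2(p_{j_1},p_{j_2})$. Substituting into the counting recurrence and comparing with the determinant recurrence gives
\[
av(3) = (p_{j_3}-1)\,A_2(p_{j_1},p_{j_2}) + F_2(p_{j_1},p_{j_2}) = A_3(p_{j_1},p_{j_2},p_{j_3}),
\]
which is the claimed identity. I do not expect any real obstacle: the work is essentially bookkeeping to verify that neither the determinantal manipulations nor the CRT-based counting step used any property of the primes beyond their distinctness, so the entire chain of reasoning from Theorem \ref{thm:main} transfers verbatim to one inductive step for three arbitrary primes.
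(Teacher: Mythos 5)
Your proposal is correct, but it proves the lemma by a different route than the paper does. The paper's own proof is a direct, self-contained count: it expands the determinant explicitly as $A_3(p_i,p_j,p_k)=p_ip_jp_k+2-p_i-p_j-p_k$, partitions $[1,p_ip_jp_k]$ in three ways into blocks of length $p_ip_j$, $p_ip_k$ and $p_jp_k$, uses the CRT to find exactly one doubly covered integer per block (hence $p_k$, $p_j$ and $p_i$ such integers for the three pairs of primes) plus exactly one triply covered integer, and concludes by inclusion--exclusion that the number of available integers matches this polynomial. You instead run one step of the recursive machinery behind Theorem~\ref{thm:main}: you observe that the Laplace-expansion proofs of Lemmas~\ref{lem1} and~\ref{lem2} never use the specific prime values, giving $A_3=F_2+(p_{j_3}-1)A_2$ and $F_3=(p_{j_3}-1)F_2$, that the counting recurrence of Lemma~\ref{lem3} only needs $\gcd(p_{j_1}p_{j_2},p_{j_3})=1$, and you then feed in the two-prime base case from the preceding lemma. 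This is sound, and it is in fact exactly the argument the paper invokes later for the general Theorem~\ref{thm2}; applied at $k=3$ it makes the lemma a corollary of the generalised recurrences (and also yields the statement about free integers, which the lemma does not even claim). What the paper's direct proof buys is independence from the generalised lemmas and a concrete illustration of the inclusion--exclusion structure, which the paper then uses to motivate the identity $A_3=F_2+(p_k-1)A_2$ displayed immediately after the lemma; what your proof buys is uniformity with the main theorem and an explicit check that no step of the induction depends on the primes being the first $k$ primes.
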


\begin{proof}
We have that
$$
A_3(p_i, p_j, p_k) = \det
 \begin{bmatrix}
p_i & 1 &1 \\
1 & p_j  &1 \\
1 & 1 & p_k
\end{bmatrix} = p_i \cdot p_j \cdot p_k +1 + 1 - p_i - p_j - p_k
$$
We interpret this as follows for $P=p_i p_j p_k$.
The interval $[1,P]$ can be partitioned in the following ways:
\begin{align*}
[1,P] &= [1,p_i p_j] \cup [p_i p_j +1, 2p_i p_j] \cup \ldots \cup [(p_k -1) p_i p_j,P] \\
&=[1,p_i p_k] \cup [p_i p_k +1, 2p_i p_k] \cup \ldots \cup [(p_j -1) p_i p_k,P] \\
& = [1,p_k p_j] \cup [p_k p_j +1, 2p_k p_j] \cup \ldots \cup [(p_i -1) p_k p_j,P]
\end{align*}
The first partition corresponds to the pair of primes $(p_i,p_j)$ and by the CRT there is exactly one integer covered by both primes in each interval of the partition. Since there are $p_k$ such intervals, there are $p_k$ integers with $\gamma(n)>1$.
The next partition corresponds to the pair $(p_i,p_k)$ contributing $p_j$ integers with $\gamma(n)>1$ and the third to the pair $(p_k,p_j)$ contributing $p_i$ such integers.
Finally, by the CRT we know that there is exactly one point with $\gamma(n)=3$ in $[1,P]$, which is counted in each of the above cases. Hence, by the principle of inclusion exclusion we get that there are
$$p_i p_j p_k - p_i - p_j - p_k +1 +1$$
available integers in the interval $[1,p_i p_j p_k]$.
\end{proof}

Hence, we have that
\begin{align*}
A_3(p_i, p_j, p_k) &= p_i  p_j  p_k +1 + 1 - p_i - p_j - p_k \\
&= p_i  p_j p_k + p_i p_j - p_i p_j +1 + 1 - p_i - p_j - p_k \\
&= p_i  p_j - p_i - p_j +1 + (p_k - 1) (p_i p_j - 1) \\
&= F_2(p_i, p_j) + (p_k-1) A_2(p_i, p_j)
\end{align*}

Furthermore, Lemmas \ref{lem1}, \ref{lem2} and \ref{lem3} can be generalised to imply the following theorem.

\begin{theorem} \label{thm2}
Let $p_{j_1}, \ldots, p_{j_k}$ be $k$ distinct prime numbers and let $P_k:=\prod_{i=1}^k p_{j_i}$ be their product. 
For $k\geq 2$ and a given choice of residue classes, i.e., one residue class $r_i$ for each prime $p_i$, the number of available elements in $[1,P_k]$ is $A_k(p_{j_1}, \ldots, p_{j_k})$ and the number of free elements is $F_k(p_{j_1}, \ldots, p_{j_k})$.
In other words, for every choice of residue classes, there are $A_k(p_{j_1}, \ldots, p_{j_k})$ integers $n$ with $1\leq n \leq P_k$ such that $\gamma(n)\leq 1$ and $F_k(p_{j_1}, \ldots, p_{j_k})$ integers with $\gamma(n)=0$.
\end{theorem}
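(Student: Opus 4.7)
The plan is to directly transpose the proof of Theorem \ref{thm:main}, exploiting the fact---already highlighted by the author---that the earlier argument never uses that $p_1,\ldots,p_k$ are the \emph{first} $k$ primes. The only properties actually exploited are (i) the diagonal entries of the two matrices are nonzero, and (ii) the moduli involved are pairwise coprime. Both hold verbatim when the primes are replaced by any distinct primes $p_{j_1},\ldots,p_{j_k}$.

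Concretely, I would first establish the generalizations of Lemmas \ref{lem1} and \ref{lem2}, namely
\[
F_k(p_{j_1},\ldots,p_{j_k}) = (p_{j_k}-1)\,F_{k-1}(p_{j_1},\ldots,p_{j_{k-1}})
\]
and
\[
A_k(p_{j_1},\ldots,p_{j_k}) = F_{k-1}(p_{j_1},\ldots,p_{j_{k-1}}) + (p_{j_k}-1)\,A_{k-1}(p_{j_1},\ldots,p_{j_{k-1}}).
\]
The Laplace expansions along the last row carry over word-for-word, since $p_{j_k}$ only plays the role formerly played by $p_k$. Next I would generalize Lemma \ref{lem3}: writing $P_{k-1} := \prod_{i<k} p_{j_i}$, the translates $x,\,x+P_{k-1},\,\ldots,\,x+(p_{j_k}-1)P_{k-1}$ of any $x\in[1,P_{k-1}]$ form a complete residue system modulo $p_{j_k}$, because $\gcd(P_{k-1},p_{j_k})=1$ is exactly the distinctness hypothesis on the primes. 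Hence precisely one translate falls into the residue class chosen for $p_{j_k}$, regardless of that choice, yielding $free(k) = (p_{j_k}-1)\,free(k-1)$ and $av(k) = (p_{j_k}-1)\,av(k-1) + free(k-1)$ exactly as before.

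With these four recurrences in hand, I would close by induction on $k$, with base case $k=2$ supplied by Lemma \ref{lem5}. The inductive step is the identical manipulation carried out in Section \ref{sec4}: matching the determinant recurrences to the counting recurrences forces $av(k)=A_k(p_{j_1},\ldots,p_{j_k})$ and $free(k)=F_k(p_{j_1},\ldots,p_{j_k})$. I expect no serious obstacle: the entire argument is structural, and the only point demanding care is notational bookkeeping---ensuring that at step $k$ the prime being added is the one occupying the $(k,k)$-entry of the matrix. If desired, a quick row/column permutation argument shows that $A_k$ and $F_k$ are symmetric functions of their arguments, so the ordering chosen is in fact immaterial.
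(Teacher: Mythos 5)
Your proposal is correct and takes essentially the same route as the paper: the paper's own argument for this theorem consists of observing that Lemmas \ref{lem1}--\ref{lem3} and the induction of Section \ref{sec4} are purely structural (using only that $\gcd$ of the new prime with the product of the previous ones is $1$ in the complete-residue-system step) and of supplying the generalised base case by direct computation, exactly as you do. Your extra remarks on the coprimality condition and on the symmetry of $A_k$ and $F_k$ merely make explicit what the paper leaves implicit.
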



\end{document}